\setlist[itemize]{noitemsep, topsep=0pt}
\setlist[enumerate]{noitemsep, topsep=0pt}
\newcommand{\mcal}{\mathcal}
\renewcommand{\phi}{\varphi}
\newcommand{\fin}{\textup{fin}}
\newcommand{\m}{\textup{m}}
\newcommand{\id}{\textup{id}}
\newcommand{\ON}{\textup{ON}}
\newcommand{\Part}{\textup{Part}}
\newcommand{\fix}{\textup{fix}}
\newcommand{\sym}{\textup{sym}}
\newtheorem{thm}{Theorem}[section]
\newtheorem{lem}[thm]{Lemma}
\newtheorem{cor}[thm]{Corollary}
\begin{document}

\title[The partitions and the permutations of a set]{The partitions whose members are finite and the permutations with at most $n$ non-fixed points of a set}
\author{Nattapon Sonpanow}
\author{Pimpen Vejjajiva}
\address{Department of Mathematics and Computer Science, Faculty of Science, Chulalongkorn University, Bangkok 10330, Thailand}

\email{nattapon.so@chula.ac.th}
\email{pimpen.v@chula.ac.th}

\subjclass[2010]{Primary 03E10; Secondary 03E25}
\keywords{axiom of choice, partition, permutation, ZF}

\date{}

\maketitle

\begin{abstract}
  We write  $S_{\leq n}(A)$ and $\Part_{\fin}(A)$  for
 the set of permutations with at most $n$ non-fixed points, where $n$ is a natural number, and the set of partitions  whose members are finite, respectively, of a set $A$.
  Among our results, we show, in the Zermelo-Fraenkel set theory, that  $|\Part_{\fin}(A)| \nleq |S_{\leq n}(A)|$ for any infinite set $A$ and if $A$ can be linearly ordered, then $|S_{\leq n}(A)| < |\Part_{\fin}(A)|$   while the statement ``$|S_{\leq n}(A)|\leq|\Part_{\fin}(A)|$ for all infinite sets $A$" is not provable  for  $n\geq 3$.
\end{abstract}

\section{Introduction}
 With the Axiom of Choice (AC), $|\Part(A)| = 2^{|A|}=|A|!$ for any infinite set $A$, where $\Part(A)$ is the set of partitions, $2^{|A|}$ and $|A|!$ are the cardinalities of the power set and the set of permutations of $A$ respectively. Without AC, it follows from the results in  \cite[Proposition 8.3]{HS2001} and \cite{DH} that these equalities are not provable in the Zermelo-Fraenkel set theory (ZF).

 Without AC, it is not hard to show that $2^{|A|} \leq |\Part(A)|$ for any set $A$ with $|A|\geq 5$.  Halbeisen and Shelah  showed in \cite[Theorem 3]{HS1994} that \lq\lq$|\fin(A)| < 2^{|A|}$ for any infinite set $A$\rq\rq\ is provable in ZF, where $\fin(A)$ is the set of finite subsets of $A$. As a result, $|\fin(A)| < |\Part(A)|$ for any infinite set $A$. A stronger result in \cite[Theorem 3.7]{PV} showed that  \lq\lq$|\fin(A)| < |\Part_{\fin}(A)|$ for any set $A$ with $|A|\geq 5$\rq\rq\ is provable in ZF, where  $\Part_{\fin}(A)$ is the set of partitions of $A$ whose members are finite. Moreover, \lq\lq$|\Part_{\fin}(A)|<2^{|A|}$ for some infinite set $A$\rq\rq\ is consistent with ZF (cf. \cite[Corollary 4.7]{PV}).

 For a set $A$, we write $S_{\fin}(A)$, $S_{\leq n}(A)$, and $S_{n}(A)$,  where $n$ is a natural number, for the sets of permutations on $A$ with finitely many non-fixed points, at most $n$ non-fixed points, and exactly $n$ non-fixed points respectively.
 The results in \cite[Theorems 2.9 and 2.10]{NPV} showed that, under AC$_{<\aleph_0}$, the Axiom of Choice for families of nonempty finite sets, $|S_{\fin}(A)|\leq|\fin(A)|$ if and only if $A$ is a Dedekind infinite set. Therefore, under AC$_{<\aleph_0}$, $|S_{\leq n}(A)|<|\Part_{\fin}(A)|$ for any Dedekind infinite set $A$.
 Shen and Yuan also showed in  ZF that $|S_{\leq n}(A)| < |A|!$ for any infinite set $A$ and any natural number $n$ such that $|A|>n>0$ (cf. \cite[Corollary 3.22]{SY}).

In this paper, we investigate relationship between $|\Part_{\fin}(A)|$  and $|S_{\leq n}(A)|$ for infinite sets $A$. We show, in ZF, that  $|\Part_{\fin}(A)| \nleq |S_{\leq n}(A)|$ for any infinite set $A$ and $|S_{\leq n}(A)| < |\Part_{\fin}(A)|$ if $A$ can be linearly ordered,  while the statement ``$|S_{n}(A)|\leq|\Part_{\fin}(A)|$ for all infinite sets $A$" is not provable in ZF for $n\geq 3$. In addition, while the result  in \cite[Theorem 3.2]{NV} showed that ``$|S_n(A)| \leq |S_{n+1}(A)|$ for all infinite sets $A$",  where $n>1$, is not provable in ZF, we show that it is provable if the subscript $n+1$ is replaced by any natural number $m$ such that $m\geq 2n$.

\section{Results in ZF}

In this section, we shall work in ZF without AC. We write $|A|$ for the \emph{cardinality} of a set $A$. For sets $A$ and $B$, we say $|A|=|B|$ if there is an explicit bijection from $A$ onto $B$, $|A|\leq|B|$ if there is an explicit injection from $A$ to $B$, and $|A|<|B|$ if $|A|\leq|B|$ but $|A|\neq|B|$. A set $A$ is \emph{Dedekind-infinite} if $\aleph_0\leq |A|$, otherwise $A$ is \emph{Dedekind-finite}.

Throughout, let $n$ be a natural number.

Apart from the notations introduced earlier, for a set $A$, let
\begin{enumerate}
\item $[A]^n=\{X\subseteq A :|X|=n\}$,
\item $\m(\phi)=\{ x\in A : \phi(x)\neq x \}$ where $\varphi$ is a permutation on $A$,
\item $(a_0; a_1;\ldots; a_n)$, where $a_0, a_1,\ldots, a_n$ are distinct elements of $A$, denote the cyclic permutation on A such that \[a_0\mapsto a_1\mapsto\ldots\mapsto a_n\mapsto a_0.\]
\end{enumerate}

First, we shall show that $|\Part_{\fin}(A)| \nleq |S_{\leq n}(A)|$ for any infinite set $A$.  The following facts are needed for the proof.

\begin{thm}\cite[Theorem 5.19]{{Hbs}}
For any infinite ordinal $\alpha$, $|\alpha|=|\fin(\alpha)|$.
\end{thm}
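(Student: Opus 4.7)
The plan is to prove both directions of the equality $|\alpha| = |\fin(\alpha)|$ separately. The forward direction $|\alpha| \leq |\fin(\alpha)|$ is immediate via the singleton injection $\beta \mapsto \{\beta\}$.

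For the reverse direction $|\fin(\alpha)| \leq |\alpha|$, the strategy is to factor through finite sequences. Since $\alpha$ is an ordinal, it carries its canonical well-ordering, and so every finite $F \subseteq \alpha$ has a unique increasing enumeration. This yields an injection $\fin(\alpha) \hookrightarrow \bigcup_{n \in \omega} \alpha^n$ sending each finite subset to this enumeration.

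The key ingredient is Hessenberg's theorem that $|\alpha \times \alpha| = |\alpha|$ for every infinite ordinal $\alpha$, which is provable in plain ZF by putting the maximum-first lexicographic well-order on $\alpha \times \alpha$ and showing by transfinite induction that its order type equals $\alpha$; no choice is required, since ordinals carry canonical well-orderings. Iterating this gives injections $g_n: \alpha^n \hookrightarrow \alpha$ for every $n \geq 1$, and these fit together into an injection $\bigcup_{n \in \omega} \alpha^n \hookrightarrow \omega \times \alpha$ by $\sigma \mapsto (n, g_n(\sigma))$ for $\sigma \in \alpha^n$ (with the empty sequence handled separately). Since $|\omega| \leq |\alpha|$, a further application of Hessenberg gives $|\omega \times \alpha| = |\alpha|$, and hence $|\fin(\alpha)| \leq |\alpha|$.

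The only delicate point is the ZF-provability of Hessenberg's theorem itself: the argument works precisely because every pairing construction is canonical with respect to the ordinal ordering of $\alpha$, so no appeal to AC is needed at any stage. This is what makes the identity $|\alpha| = |\fin(\alpha)|$ available as a tool in ZF for the subsequent results about $\Part_{\fin}(A)$ and $S_{\leq n}(A)$.
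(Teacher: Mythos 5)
The paper does not prove this statement; it is quoted from Halbeisen's book, so there is no internal proof to compare against. Your argument is the standard one and is essentially correct: the singleton map gives $|\alpha|\leq|\fin(\alpha)|$, and the reverse inequality follows by enumerating each finite subset in increasing order (canonical, hence choice-free), injecting $\alpha^n$ into $\alpha$ by iterated Hessenberg pairing, and absorbing the factor $\omega$. One small imprecision: the order type of the maximum-first (G\"odel) well-ordering on $\alpha\times\alpha$ equals $\alpha$ only when $\alpha$ is an initial ordinal (an aleph); for $\alpha=\omega+1$, say, it is strictly larger. The fix is routine and canonical --- first replace $\alpha$ by the cardinal $|\alpha|$, i.e.\ the least ordinal equinumerous with $\alpha$, prove $|\kappa\times\kappa|=|\kappa|$ for infinite cardinals $\kappa$ by the transfinite induction you describe, and transport the result back --- but as written the induction claim is not literally true for arbitrary infinite ordinals. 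With that adjustment your proof is complete and matches the textbook proof being cited.
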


\begin{lem}\label{beta}
For any infinite ordinal $\alpha$, there is an infinite ordinal $\beta$ such that we can construct a bijection between $\alpha$ and $3\cdot\beta$.
\end{lem}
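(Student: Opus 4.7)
My plan is to apply the ordinal division algorithm to write $\alpha = \omega \cdot \gamma + n$, where $\gamma \geq 1$ is an ordinal and $n \in \omega$. This decomposition is available in ZF and is guaranteed since $\alpha \geq \omega$. I will then take $\beta = \omega \cdot \gamma$, which is an infinite ordinal because $\gamma \geq 1$.

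The crucial algebraic fact is $3 \cdot \omega = \omega$, since $3 \cdot \omega = \sup\{3 \cdot k : k < \omega\}$. Combined with the associativity of ordinal multiplication, this yields
\[
3 \cdot \beta \;=\; 3 \cdot (\omega \cdot \gamma) \;=\; (3 \cdot \omega) \cdot \gamma \;=\; \omega \cdot \gamma \;=\; \beta,
\]
so constructing a bijection between $\alpha$ and $3 \cdot \beta$ reduces to exhibiting one between $\omega \cdot \gamma + n$ and $\omega \cdot \gamma$.

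For that bijection I would define $f \colon \omega \cdot \gamma + n \to \omega \cdot \gamma$ piecewise: send each $\omega \cdot \gamma + k$ (for $k < n$) to $k$, thereby moving the top $n$ elements of $\alpha$ down into the block $[0,n)$; send each $k < \omega$ to $n + k$, shifting the initial $\omega$-block of $\alpha$ by $n$; and fix every $\xi$ with $\omega \leq \xi < \omega \cdot \gamma$. The images of these three pieces are $[0,n)$, $[n,\omega)$, and $[\omega, \omega \cdot \gamma)$, which partition $\omega \cdot \gamma$, so $f$ is a bijection.

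The only non-routine ingredient here is the ordinal-arithmetic identity $3 \cdot \omega \cdot \gamma = \omega \cdot \gamma$; once this is in hand, the bijection is straightforward bookkeeping. Everything is defined explicitly from $\alpha$ via the division algorithm, so no form of choice is required, and the proof proceeds entirely within ZF.
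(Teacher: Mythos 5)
Your proof is correct, but it takes a different route from the paper's. The paper divides $\alpha$ by $3$ directly, writing $\alpha = 3\cdot\beta + \gamma$ with $\gamma < 3$, and then constructs a bijection $3\cdot\beta \to \alpha$ by absorbing the at most two leftover points into the initial $\omega$-block. You instead divide by $\omega$, writing $\alpha = \omega\cdot\gamma + n$ with $n < \omega$, set $\beta = \omega\cdot\gamma$, and use the ordinal identity $3\cdot(\omega\cdot\gamma) = (3\cdot\omega)\cdot\gamma = \omega\cdot\gamma$ to make $3\cdot\beta$ \emph{literally equal} to $\beta$; the remaining work is a bijection between $\omega\cdot\gamma + n$ and $\omega\cdot\gamma$, again by shifting the initial $\omega$-block. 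Both arguments are explicit, uniform in $\alpha$, and choice-free, and both reduce in the end to the same finite-shift trick. Your version buys a cleaner target (no arithmetic with the remainder $\gamma < 3$ is needed, since $3\cdot\beta = \beta$ on the nose) at the cost of invoking associativity of ordinal multiplication and $3\cdot\omega = \omega$; the paper's version is more self-contained, using only division by $3$ and a hands-on three-case definition. Either proof serves the intended application in Theorem \ref{BigThm} equally well.
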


\begin{proof}
Let $\alpha$ be an infinite ordinal. Then $\alpha = 3\cdot \beta+\gamma$ for some infinite ordinal $\beta$ and some $\gamma<3$. It is clear if $\gamma=0$. Otherwise, define $f\colon 3\cdot\beta\to \alpha$ by
\begin{alignat*}{3}
f(n)&= 3\cdot\beta+n \quad&&\text{ for }n<\gamma,\\
f(n+\gamma)&=n &&\text{ for }n<\omega,\\
f(\xi)&=\xi &&\text{ for }\omega\leq \xi<3\cdot\beta.
\end{alignat*}
\noindent We can see that $f$ is bijective as desired.
\end{proof}

\begin{lem} \label{lem1}
For any natural number $k$, if $k\geq 2^{2n}$, where $n>0$, then $|\Part_{\fin}(k)| > |S_{\leq n}(k)|$.
\end{lem}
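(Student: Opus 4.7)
The plan is a direct finite counting argument. For finite $k$, every partition of $k$ is automatically into finite members, so $|\Part_{\fin}(k)|$ equals the $k$-th Bell number, which I will denote $B_k$. Decomposing each $\phi \in S_{\leq n}(k)$ by the exact size $j$ of its support $\m(\phi)$ and choosing first the $j$-element support and then a fixed-point-free permutation of it, one gets
\[
|S_{\leq n}(k)| = \sum_{j=0}^{n}\binom{k}{j} d_j,
\]
where $d_j$ is the number of derangements of a $j$-element set (so $d_0=1$, $d_1=0$).

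I would first produce a polynomial upper bound on $|S_{\leq n}(k)|$. Using $d_j \leq j!$ and the identity $\binom{k}{j}j! = k(k-1)\cdots(k-j+1) \leq k^j$, a geometric-series estimate yields
\[
|S_{\leq n}(k)| \leq \sum_{j=0}^{n} k^j \leq 2k^n \qquad (k\geq 2).
\]
For a lower bound on $B_k$, I would use the fact that the Stirling number of the second kind $S(k,2) = 2^{k-1}-1$ counts partitions of $k$ into exactly two nonempty blocks, so
\[
|\Part_{\fin}(k)| = B_k \geq 2^{k-1}-1.
\]

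It then remains to verify $2^{k-1}-1 > 2k^n$ whenever $k \geq 2^{2n}$. Taking $\log_2$, this reduces to $k-O(1) > n\log_2 k$, and for $k=2^{2n}$ to $2^{2n}-O(1) > 2n^2$, which holds comfortably for every $n\geq 2$ (and increases as $k$ grows). The boundary case $n=1$ does not fit this general estimate but is trivial by hand: $|S_{\leq 1}(k)| = 1$ (only the identity has at most one non-fixed point), whereas $B_k \geq 2^{k-1}-1 \geq 7$ for $k\geq 4=2^{2}$.

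The main (and only) obstacle is that the clean bound $2k^n$ carries slack precisely at $n=1$, forcing the tiny separate check above. Beyond that, the proof is a routine exponential-versus-polynomial comparison, and all steps are elementary arithmetic; Lemma \ref{beta} is not needed here, as it will be used downstream in the passage from the finite counting to the infinite-set statement.
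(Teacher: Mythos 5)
Your proof is correct and follows essentially the same route as the paper: lower-bound $|\Part_{\fin}(k)|$ by $2^{k-1}$ (up to $-1$) via two-block partitions, upper-bound $|S_{\leq n}(k)|$ by a polynomial of degree $n$ in $k$, and compare at $k\geq 2^{2n}$. The only difference is cosmetic: the paper bounds $|S_{\leq n}(k)|\leq\binom{k}{n}n!\leq k^n$ rather than your $2k^n$, and the tighter constant makes the comparison $2^{k-1}>k^n$ go through uniformly, so no separate check at $n=1$ is needed.
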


\begin{proof}
For any natural number $k>0$, since for each $X\in\mcal{P}(k)\setminus \{\emptyset,k\}$, $\{X,k\setminus X\}$ is a partition of $k$ and $\{k\}$ is also a partition of $k$, there are at least $2^{k-1}$ partitions of $k$. Thus $|\Part_{\fin}(k)| \geq 2^{k-1}> k^n$ for all $k \geq 2^{2n}$. Since each permutation in $S_{\leq n}(k)$ can be obtained by permuting $n$ elements chosen from $k$, for any $k\geq 2^{2n}$, we have
\[ |S_{\leq n}(k)| \leq {k \choose n} n! = k(k-1)\ldots(k-n+1) \leq k^n <|\Part_{\fin}(k)|.\]
\end{proof}

\begin{thm} \label{BigThm}
For any infinite set $A$, $|\Part_{\fin}(A)| \nleq |S_{\leq n}(A)|$.
\end{thm}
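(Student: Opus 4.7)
We argue by contradiction: suppose $f:\Part_{\fin}(A)\to S_{\leq n}(A)$ is an injection. The case $n\leq 1$ is immediate since $|S_{\leq 1}(A)|=1$, so assume $n\geq 2$. Fixing any $a_0\in A$, the map $a\mapsto (a;\,a_0)$ embeds $A\setminus\{a_0\}$ into $S_{\leq n}(A)$, so $S_{\leq n}(A)$ is infinite and its Hartogs number $\alpha=\aleph(S_{\leq n}(A))$ satisfies $\alpha\geq\omega$. By Lemma~\ref{beta} there is an infinite ordinal $\beta$ with $|\alpha|=|3\cdot\beta|=|\beta|$, and by the cited theorem $|\beta|=|\fin(\beta)|$; thus $|\alpha|=|\fin(\beta)|$.

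The strategy is to construct an injection $\Phi:\alpha\to\Part_{\fin}(A)$; once we have it, the composition $f\circ\Phi$ injects $\alpha$ into $S_{\leq n}(A)$, contradicting the defining property of $\alpha$. To build $\Phi$ we use Lemma~\ref{lem1}: on any finite $F\subseteq A$ with $|F|\geq 2^{2n}$ there are strictly more partitions of $F$ than permutations of $F$ with at most $n$ non-fixed points. Extending each partition of $F$ by singletons on $A\setminus F$ yields an injection $\Part_{\fin}(F)\hookrightarrow\Part_{\fin}(A)$, so by the pigeonhole principle $f$ must send at least one such extended partition to a permutation whose support is not contained in $F$. This ``overflow'' phenomenon, applied uniformly over growing finite $F$ and threaded through the three copies of $\beta$ supplied by the decomposition $|\alpha|=|3\cdot\beta|$ together with the bijection $\fin(\beta)\leftrightarrow \alpha$, yields distinct partitions of $A$ indexed by every $\xi<\alpha$.

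The main obstacle is defining $\Phi$ canonically in ZF. Without any choice principle, we cannot sample overflow points arbitrarily; the construction must combine the counting surplus of Lemma~\ref{lem1} with the threefold ordinal arithmetic of Lemma~\ref{beta} in a way that uniformly identifies a single partition for each $\xi<\alpha$. This uniformity is the delicate combinatorial heart of the proof, and is what forces us to pass through the auxiliary ordinal $\beta$ rather than attempting to encode $S_{\leq n}(A)$ into $\fin(A)$ directly (where an orderless enumeration of the permutations of each finite support is the very obstacle we avoid).
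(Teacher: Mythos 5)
Your high-level strategy is the right one and matches the paper's: assume an injection $f\colon\Part_{\fin}(A)\to S_{\leq n}(A)$, use the counting surplus of Lemma~\ref{lem1} to force ``overflow'' beyond any finite support, and use Lemma~\ref{beta} to drive a transfinite diagonalization that injects arbitrarily large ordinals into $\Part_{\fin}(A)$, contradicting Hartogs. But as written this is a plan, not a proof: you explicitly defer ``the delicate combinatorial heart,'' and that heart is essentially the entire content of the theorem. Concretely, three things are missing. First, the base construction: from a finite $A_i\supseteq A_0$ with $|A_0|=2^{2n}$ one must exhibit a \emph{canonical} strictly larger finite set, e.g.\ $A_{i+1}=A_i\cup\bigcup\{\m(\phi):\phi\in B_i\}$ where $B_i=f[\Part^{(A)}_{\fin}(A_i)]\setminus S^{(A)}_{\leq n}(A_i)$ is nonempty by Lemma~\ref{lem1} and finite; the point is that one takes the union over \emph{all} overflow permutations rather than choosing one, which is what makes the recursion choice-free. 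You gesture at the pigeonhole step but never say how to avoid the choice.

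Second, and more seriously, the successor step: given a one-to-one sequence $\langle\Pi_i\rangle_{i<\alpha}$, you must produce a new partition not equal to any $\Pi_i$. This requires showing that the set $K$ of equivalence classes of $A_\alpha=\bigcup_{i<\alpha}\m(f(\Pi_i))$ under ``$x\sim y$ iff $x$ and $y$ lie in exactly the same sets $\m(f(\Pi_i))$'' is canonically equinumerous with $\alpha$. One direction is the Halbeisen--Shelah argument injecting $K$ into $\fin(\alpha)$ via the sets $g_x$ of ``shrinking stages''; the other direction needs the observation that, because $f$ is injective and each $\m(f(\Pi_i))$ has at most $n$ elements, each fiber $\{j:\m(f(\Pi_j))=\m(f(\Pi_i))\}$ has at most $n!$ elements, giving an injection of $\alpha$ into $\fin(\gamma)\times\gamma$. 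None of this appears in your outline, and without it there is no bijection through which to thread Lemma~\ref{beta}. Third, you never explain what the factor $3$ is for: the bijection $H\colon 3\cdot\beta\to K$ groups the classes into triples, and for each triple there are \emph{four} mutually incompatible candidate local partitions, of which each of the three partitions $\Pi_{t(3\cdot\delta+j)}$ can contain at most one, so a least unblocked candidate always exists; assembling these yields $\Pi_\alpha\neq\Pi_i$ for all $i<\alpha$. Since the diagonalization step is exactly where a naive argument would need choice, omitting it leaves the proof with a genuine gap rather than a routine verification.
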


\begin{proof}
Let $A$ be an infinite set. It is trivial if $n\leq 1$. Suppose there is an injection $F \colon  \Part_{\fin}(A) \rightarrow S_{\leq n}(A)$ where $n\geq 2$. We shall show that for any infinite ordinal $\alpha$, we can construct a one-to-one sequence of members of $\Part_{\fin}(A)$ with length $\alpha$, which contradicts Hartogs' Theorem.

In order to construct such one-to-one sequence with length $\omega$, we shall construct a family $\{ A_i : i\in\omega \} \subseteq \fin(A)$ such that $A_{i} \subsetneq A_{i+1}$ for all $i\in\omega$.

Pick $A_0 \subseteq A$ such that $|A_0|=2^{2n}$. Suppose we have already defined a finite set $A_i$ such that $A_0 \subseteq A_i \subseteq A$. By Lemma \ref{lem1},
$$ \left| \Part^{(A)}_{\fin}(A_i) \right| = \left| \Part_{\fin}(A_i) \right| > \left| S_{\leq n}(A_i) \right| = \left| S^{(A)}_{\leq n}(A_i) \right|, $$
where
\begin{align*}
\Part^{(A)}_{\fin}(A_i) &= \{ \Pi \cup [A\setminus A_i]^1 : \Pi \in \Part_{\fin}(A_i) \} \text{\ and} \\
S^{(A)}_{\leq n}(A_i) &= \{ \phi \cup \id_{A\setminus A_i} : \phi \in S_{\leq n}(A_i) \}.
\end{align*}
Let $B_i = F\left[ \Part^{(A)}_{\fin}(A_i) \right]\setminus S^{(A)}_{\leq n}(A_i) $. Since $F$ is injective, $\emptyset \neq B_i \subseteq S_{\leq n}(A)$. Moreover, $\m(\phi) \not\subseteq A_i$ for all $\phi \in B_i$. Note that $B_i$ is finite since $\Part^{(A)}_{\fin}(A_i)$ is finite. We define
\[A_{i+1} = A_i \cup \bigcup\{ \m(\phi) : \phi \in B_i \}.\]
Then $A_{i+1} \supsetneq A_i$ and $A_{i+1}$ is still a finite subset of $A$.

Now, for each $n\in\omega$, let \[P_n= \{ A_n \} \cup [A\setminus A_n]^1.\] We can see that $\langle P_0, P_1, \ldots, P_n, \ldots\rangle_\omega$ is a one-to-one sequence as desired.

Next, assume there is a one-to-one sequence $\langle \Pi_0, \Pi_1, \ldots, \Pi_i, \ldots\rangle_\alpha$ of members of $\Part_{\fin}(A)$  where the length  $\alpha$ is an infinite ordinal.

For each $i<\alpha$, let $\phi_i = F(\Pi_i)$ and $A_\alpha = \bigcup \{\m(\phi_i) : i<\alpha\}$.

 Define an equivalence relation $\sim$ on $A_\alpha$ by
\[x \sim y\text{ if and only if }\forall i<\alpha(x\in \m(\phi_i) \leftrightarrow y\in \m(\phi_i)).\]
Obviously,  $\left| [x]_\sim \right| \leq n$ for all $x\in A_\alpha$. Next, we shall show that there is a bijection between $\{ [x]_\sim : x\in A_\alpha \}$ and $\alpha$ by using the idea from the proof of \cite[Theorem 3]{HS1994}.

For each $x\in A_\alpha$ and $\mu\leq\alpha$, define
\[D_{x,\mu} = \bigcap\{ \m(\phi_i) : i<\mu\text{ and }x\in \m(\phi_i) \},\]
where $D_{x,\mu} = A_\alpha$ if $x\notin \m(\phi_i)$ for all $i<\mu$ and define
\[g_x = \{ \iota<\alpha : x\in \m(\phi_\iota)\text{ and } D_{x,\iota+1} \subsetneq D_{x,\iota} \}.\]
It is easy to see that, for any $x,y\in A_\alpha$,  $x \sim y$ implies $g_x = g_y$. For the converse, we can see that for any $x,y\in A_\alpha$, if $\nu<\alpha$ is the least ordinal such that $x \in \m(\phi_\nu)$ but $y \notin \m(\phi_\nu)$, then $D_{x,\nu+1} \subsetneq D_{x,\nu} = D_{y,\nu} = D_{y,\nu+1}$, which implies $\nu \in g_x\setminus g_y$, so $g_x\neq g_y$. Note that for each $x\in A_\alpha$, since $D_{x,\mu}$ is finite for any $\mu\leq\alpha$, $g_x \in \fin(\alpha)$.  Thus, by sending each $[x]_\sim$ to $g_x$, we get an injection from $K  = \{ [x]_\sim : x\in A_\alpha \}$ to $\fin(\alpha)$. As $|\fin(\alpha)|=|\alpha|$, we can construct an injection from $K $ to $\alpha$. So $K $ has a well order induced by $\alpha$ with an order type, say $\gamma$. Hence, there is a bijection $p\colon  K  \rightarrow \gamma$. Since $K $  is infinite, so is $\gamma$.
Note that, since $F$ is injective, for each $i<\alpha$, $\{j<\alpha : \m(\phi_i)=\m(\phi_j)\}$ has at most $n!$ elements.
Thus, the map $q \colon \alpha \rightarrow \fin(\gamma)\times\gamma$  defined by
\[q(i) = (\{ p([x]_\sim) : x\in \m(\phi_i)\}, k_i),\]
where $i$ is the $k_i$th ordinal in the set $\{j<\alpha : \m(\phi_i)=\m(\phi_j)\}$, is an injection.
Since $|\fin(\gamma)\times \gamma|=|\gamma|$, we obtain an injection from $\alpha$ to $\gamma$.
Since $\gamma\leq\alpha$, by the Cantor-Bernstein Theorem, we get a bijection between $\alpha$ and $\gamma$, and thus we can construct a bijection $h \colon  \alpha \rightarrow K $.

By Lemma \ref{beta}, there are an infinite ordinal $\beta$ and a bijection $t \colon  3\cdot\beta \rightarrow \alpha$. So  $H = h \circ t \colon  3\cdot\beta \rightarrow K$ is a bijection.

Next, we shall construct $\Pi_{\alpha} \in \Part_{\fin}(A)$ which is distinct from $\Pi_i$ for all $i<\alpha$.

Let us fix $\delta<\beta$ and consider the following four partitions of $\bigcup \{ H(3\cdot\delta+j) : j<3 \}$:
\begin{align*}
C_{0}^{\delta} &= \{ H(3\cdot\delta), H(3\cdot\delta+1)\cup H(3\cdot\delta+2) \}, \\
C_{1}^{\delta} &= \{ H(3\cdot\delta+1), H(3\cdot\delta+2)\cup H(3\cdot\delta) \}, \\
C_{2}^{\delta} &= \{ H(3\cdot\delta+2), H(3\cdot\delta)\cup H(3\cdot\delta+1) \}, \\
C_{3}^{\delta} &= \{ \textstyle{\bigcup\{H(3\cdot\delta+j) : j<3\}} \}.
\end{align*}
Pick the least $m<4$ such that $C_{m}^{\delta} \not\subseteq \Pi_{t(3\cdot\delta+j)}$ for all $j<3$ and write $C_\delta$ for this $C_{m}^{\delta}$. After $C_\delta$'s are obtained for all $\delta<\beta$, we define\[\Pi_{\alpha} = \bigcup\{ C_\delta : \delta<\beta \} \cup [A\setminus A_\alpha]^1.\]
Note that $\bigcup\{ C_\delta : \delta<\beta \}\in \Part_{\fin}(A_\alpha)$ and so $\Pi_\alpha\in\Part_{\fin}(A)$.

For each $\delta<\beta$ and $j<3$, we have that
$C_\delta \subseteq \Pi_{\alpha}$ but $C_\delta \not\subseteq \Pi_{t(3\cdot\delta+j)}$,
so $\Pi_{\alpha} \neq \Pi_{t(3\cdot\delta+j)}$, which means $\Pi_{\alpha}$ is distinct from  $\Pi_i$ for all $i<\alpha$.
 Then we obtain a one-to-one sequence of members of $\Part_{\fin}(A)$ with length $\alpha+1$.

We can see that the sequence constructed by the above process is an extension of the sequence previously constructed. Thus, we can define a sequence whose length is a limit ordinal as the union of all sequences constructed earlier.
\end{proof}

\begin{cor} \label{S2}
For any infinite set $A$, $|S_{\leq2}(A)| < |\Part_{\fin}(A)|$.
\end{cor}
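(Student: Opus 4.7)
The plan is to combine Theorem \ref{BigThm} with an explicit injection from $S_{\leq 2}(A)$ into $\Part_{\fin}(A)$. Theorem \ref{BigThm} (with $n=2$) already gives $|\Part_{\fin}(A)| \nleq |S_{\leq 2}(A)|$, so once we exhibit the injection, the strict inequality $|S_{\leq 2}(A)| < |\Part_{\fin}(A)|$ follows immediately from the definition of $<$.

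To construct the injection, I will use the observation that $S_{\leq 2}(A)$ is very easy to describe: since no permutation moves exactly one point, $S_{\leq 2}(A) = \{\id_A\} \cup S_2(A)$, and each element of $S_2(A)$ is a transposition $(a;b)$ uniquely determined by the unordered pair $\{a,b\} = \m((a;b))$. Accordingly, define $G \colon S_{\leq 2}(A) \to \Part_{\fin}(A)$ by setting $G(\id_A) = [A]^1$ and $G((a;b)) = \{\{a,b\}\} \cup [A\setminus\{a,b\}]^1$ for each transposition.

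Every such image is a partition of $A$ whose blocks all have size at most $2$, hence lies in $\Part_{\fin}(A)$. The map $G$ is injective because $G(\id_A)$ has no block of size $2$, whereas for each transposition $(a;b)$ the partition $G((a;b))$ contains a unique block of size $2$, namely $\{a,b\}$, from which the transposition is recovered. This yields $|S_{\leq 2}(A)| \leq |\Part_{\fin}(A)|$, and combined with Theorem \ref{BigThm} gives the desired strict inequality. The corollary requires no genuine obstacle to overcome; all the real work has already been carried out in Theorem \ref{BigThm}, and the injection here is essentially forced by the fact that $S_{\leq 2}(A)$ embeds trivially into $\{\id_A\} \cup [A]^2$.
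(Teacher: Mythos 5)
Your proposal is correct and takes essentially the same route as the paper: the paper's injection sends $\phi$ with $\m(\phi)\neq\emptyset$ to $\{\m(\phi)\}\cup[A\setminus\m(\phi)]^1$ and $\id_A$ to $[A]^1$, which is exactly your map $G$, and the strict inequality is then deduced from Theorem \ref{BigThm} in the same way.
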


\begin{proof}
This follows from Theorem \ref{BigThm} since for any nonempty set $A$, the map $F\colon S_{\leq2}(A)\to\Part_{\fin}(A)$ defined  by \[F(\varphi)=\begin{cases}\{ \m(\phi) \} \cup [A\setminus\m(\phi)]^1&\text{if }\m(\phi)\neq\emptyset \\
[A]^1&\text{otherwise,}\end{cases}\] is injective.
\end{proof}

It follows from the results in \cite[Theorem 2.9]{NPV} and \cite[Theorem 3.7]{PV} that if AC$_{<\aleph_0}$ is assumed, then $|S_{\leq n}(A)| < |\Part_{\fin}(A)|$ for any Dedekind-infinite set $A$. Note that AC$_{<\aleph_0}$ is weaker than the Ordering Principle which states that \lq\lq every set can be linearly ordered\rq\rq\ (cf. \cite{Howard} and \cite[page 104]{Jech}) but the statement \lq\lq every infinite set is Dedekind-infinite\rq\rq\ is independent from the Ordering Principle (cf. \cite{Howard}). However, we obtain the same result for infinite linearly ordered  sets.

\begin{thm}\label{linear}
For any infinite linearly ordered set $A$, $|S_{\leq n}(A)| < |\Part_{\fin}(A)|$.
\end{thm}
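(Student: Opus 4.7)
By Theorem \ref{BigThm}, $|\Part_{\fin}(A)| \nleq |S_{\leq n}(A)|$ already holds in ZF, so it will suffice to construct an injection $F \colon S_{\leq n}(A) \to \Part_{\fin}(A)$; combined with Theorem \ref{BigThm} this immediately yields the strict inequality. The plan is to encode each non-identity $\phi$ as a partition whose two non-singleton blocks---a ``support block'' of size at most $n$ and a ``tag block'' of size strictly greater than $n$---allow one to reconstruct both $\m(\phi)$ and a canonical index of the derangement $\phi|_{\m(\phi)}$.

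In detail, I would set $F(\id) = [A]^1$. For a non-identity $\phi$, enumerate $M = \m(\phi) = \{a_1 < a_2 < \cdots < a_k\}$ using the linear order on $A$, and let $\pi$ be the derangement of $\{1, \ldots, k\}$ with $\phi(a_i) = a_{\pi(i)}$. Writing $d_k$ for the number of derangements of $\{1, \ldots, k\}$, let $j \in \{1, \ldots, d_k\}$ be the index of $\pi$ in the lexicographic order on one-line notations (a small finite number depending only on $k$). Using the linear order, I would first construct a canonical countably infinite subset $D = \{d_0^*, d_1^*, d_2^*, \ldots\} \subseteq A$; then, letting $b_0, b_1, b_2, \ldots$ enumerate $D \setminus M$ (skipping the finitely many $d_i^*$ lying in $M$), I would define
\[
F(\phi) = \{M,\, B\} \cup [A \setminus (M \cup B)]^1, \quad \text{where } B = \{b_0, b_1, \ldots, b_{n + j - 1}\}.
\]
Since $|M| = k \leq n < n + 1 \leq n + j = |B|$, the two non-singleton blocks of $F(\phi)$ have distinct sizes, so $M$ (the smaller one) and $j = |B| - n$ (hence $\pi$) are recoverable from $F(\phi)$, which gives the required injectivity.

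The main obstacle will be the construction of the canonical countably infinite subset $D \subseteq A$ in ZF. The natural starting point is $L(A) = \{a \in A : \{x \in A : x < a\} \text{ is finite}\}$, which is well-ordered of order type at most $\omega$, together with the dual $U(A) = \{a \in A : \{x \in A : x > a\} \text{ is finite}\}$; if either is infinite, take it as $D$. Otherwise $L(A) \cup U(A)$ is a canonical finite ``end-slice'' of $A$ and one would pass to $A \setminus (L(A) \cup U(A))$ and iterate, collecting the canonical finite slices into a single countable sequence. The delicate case will arise when the iteration encounters a dense linearly ordered set with no finitely-ranked endpoints; there one must argue separately that such a set is Dedekind-infinite (for instance by a successor-function argument when the order becomes discrete, or by nested-interval considerations otherwise) and admits a canonical enumeration in ZF. Once this Dedekind-infiniteness lemma for linearly ordered sets is in place, the rest of the construction and the verification of injectivity are routine.
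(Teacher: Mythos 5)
Your overall strategy (produce an injection $S_{\leq n}(A)\to\Part_{\fin}(A)$ and combine with Theorem \ref{BigThm} to get strictness) is exactly the paper's, and the first half of your encoding --- recovering $\m(\phi)$ as the unique small block and the order-pattern of $\phi$ on $\m(\phi)$ from the size of a second ``tag'' block --- is also in the right spirit. But the construction has a fatal gap at the step you yourself flag as the main obstacle: there is no canonical countably infinite subset $D\subseteq A$ to be had. The lemma you would need, ``every infinite linearly ordered set is Dedekind-infinite,'' is not a theorem of ZF. The paper itself points this out in the paragraph preceding Theorem \ref{linear}: the statement ``every infinite set is Dedekind-infinite'' is independent from the Ordering Principle. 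Concretely, in the ordered Mostowski permutation model (transferred to ZF via Jech--Sochor) the set of atoms is \emph{densely} linearly ordered yet Dedekind-finite, so it has no countably infinite subset at all --- canonical or otherwise. Your proposed fallback for the dense case (``argue separately that such a set is Dedekind-infinite \ldots by nested-interval considerations'') is precisely the claim that fails; nested intervals do not yield an infinite sequence of distinct points without some choice. Since a Dedekind-finite set admits no injection from $\omega$ by definition, no variant of the $D$-construction can succeed, and the tag block $B$ of size $n+j$ cannot be extracted.

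The fix, which is what the paper does, is to observe that you never need an infinite reservoir: the order-pattern $g(\phi)$ of $\phi$ on $\m(\phi)$ ranges over the \emph{fixed finite} set $S_{\leq n}(n)$, of some size $p$. So one fixes, once and for all, $p(n+1)^2$ elements of $A$ (choosing finitely many elements requires no choice) and partitions them into disjoint blocks $B^i_j$ ($i\leq n$, $j<p$) each of size $n+1$. Given $(X,j)$ with $|X|\leq n$, at least one of the $n+1$ blocks $B^0_j,\ldots,B^n_j$ is disjoint from $X$; taking the least such index $k$, the partition $\{X,B^k_j\}\cup[A\setminus(X\cup B^k_j)]^1$ encodes both $X$ and $j$ (the blocks $X$ and $B^k_j$ are distinguishable by size, $|B^k_j|=n+1>|X|$, exactly as in your size argument). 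This sidesteps Dedekind-infiniteness entirely and is where your proof needs to be repaired.
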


\begin{proof}
Let $A$ be an infinite set with  a linear order $\lhd$.  By Theorem \ref{BigThm}, it suffices to show that $|S_{\leq n}(A)| \leq |\Part_{\fin}(A)|$.
The result is trivial for $n<2$. Assume $n\geq 2$.  Define $g\colon S_{\leq n}(A) \rightarrow S_{\leq n}(n)$ as follows:

For $\phi \in S_{\leq n}(A)$ with $\m(\phi) = \{a_0,a_1,\ldots ,a_{\ell-1}\}$ where $a_0 \lhd a_1 \lhd \ldots  \lhd a_{\ell-1}$, define \[g(\phi) = \{ (x,y) \in \ell\times\ell : (a_x,a_y)\in \phi \} \cup \id_{n \setminus \ell}.\]

Let $p = |S_{\leq n}(n)|$, $f\colon S_{\leq n}(n) \rightarrow p$ be a bijection, $D=\{ X\subseteq A : 1\neq|X|\leq n\}$, and define $h\colon S_{\leq n}(A) \rightarrow D \times p$ by\[h(\phi) = (\m(\phi), f(g(\phi))).\]
To see that $h$ is injective, suppose $\phi_1,\phi_2 \in S_{\leq n}(A)$ are such that $\phi_1 \neq \phi_2$ and $\m(\phi_1) = \m(\phi_2)=\{a_0,a_1,\ldots ,a_{\ell-1}\}$ where $a_0 \lhd a_1 \lhd \ldots  \lhd a_{\ell-1}$. Since $\phi_1\neq\phi_2$, there is some $(x,y) \in \ell\times\ell$ such that $(a_x,a_y) \in (\phi_1\setminus\phi_2)\cup (\phi_2\setminus\phi_1)$, which implies  $g(\phi_1) \neq g(\phi_2)$, and hence $f(g(\phi_1)) \neq f(g(\phi_2))$ since $f$ is injective.

Next, we shall construct an injection $F\colon D \times p \rightarrow \Part_{\fin}(A)$. First, fix $p(n+1)^2$ members of $A$ and divide them into pairwise disjoint family $\{B^i_j\subseteq A : i\leq n\text{ and }j<p\}$ such that $|B^i_j|=n+1$ for each $i,j$. For each $(X,j) \in D \times p$, define
\[F(X,j) =\begin{cases} \{X,B^{k}_j\} \cup [A\setminus(X\cup B^{k}_j)]^1&\text{if } X\neq\emptyset,\\
\{B^{k}_j\} \cup [A\setminus  B^{k}_j]^1&\text{otherwise},\end{cases}\]
where $k=\min\{ i\leq n : B^i_j \cap X = \emptyset \}$.
We can see that $F$ is injective, so $F \circ h \colon  S_{\leq n}(A) \rightarrow \Part_{\fin}(A)$ is an injection.
\end{proof}

From \cite[Theorem 3.2]{NV}, we know that \lq\lq$|S_n(A)| \leq |S_{n+1}(A)|$ for any infinite set $A$\rq\rq\  is not provable in ZF for $n>1$. Surprisingly, the statement is provable when $n+1$ is replaced by some large enough natural numbers.

\begin{thm}\label{Sn}
For any infinite set $A$ and any natural number $m$, if $m\geq 2n$, where $n>1$, then $|S_n(A)| \leq |S_m(A)|$.
\end{thm}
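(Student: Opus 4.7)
My plan is to construct an explicit injection $F\colon S_n(A)\to S_m(A)$. Since $A$ is infinite, I first fix (in ZF, using only finite iterated selection from nonempty sets) an ordered finite subset $B=\{b_0,b_1,\ldots,b_{K-1}\}\subseteq A$ with $K\geq m$. The main idea is: for each $\phi\in S_n(A)$ with $\m(\phi)=X$, let $B_\phi\subseteq B\setminus X$ consist of the first $m-n$ elements of $B$ (in the fixed order on $B$) not in $X$, which is well-defined since $|B\setminus X|\geq K-n\geq m-n$. Let $\sigma_\phi$ be the single $(m-n)$-cycle on $B_\phi$ formed by traversing $B_\phi$ in the induced order, and define $F(\phi)=\phi\cdot\sigma_\phi$. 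Because $\m(\phi)$ and $B_\phi$ are disjoint, $F(\phi)\in S_m(A)$.

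To verify injectivity, I recover $\phi$ from $\psi=F(\phi)$ by locating the padding cycle in $\psi$'s cycle decomposition. Disjointness of supports means $\psi$'s cycles split cleanly into the cycles of $\phi$ together with the single cycle of $\sigma_\phi$ on $B_\phi$. If $m>2n$, the padding cycle has length $m-n>n$, strictly longer than any cycle of $\phi$, so it is identifiable as the unique cycle of $\psi$ of that length; removing it yields $X$ and then $\phi$ as the restriction of $\psi$ to $X$. If $m=2n$, the padding cycle has length $n$—matching the maximum possible $\phi$-cycle length—but it still sits inside $B$, so unless $\phi$'s support also lies inside $B$, the padding cycle is still identifiable as the unique $n$-cycle of $\psi$ contained in $B$.

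The main obstacle is the edge case $m=2n$ together with $\phi$ a single $n$-cycle whose support lies entirely inside $B$; here both the $\phi$-cycle and the $\sigma_\phi$-cycle are $n$-cycles contained in $B$, and length-based separation fails. Since $B$ is fixed and finite, this exceptional set of $\phi$ is finite, so I handle it by mapping these $\phi$ injectively into $\{\psi\in S_m(A):b_0\notin\m(\psi)\}$, which is infinite (because $A\setminus\{b_0\}$ is infinite) and is disjoint from the image of the main construction, since in the main case $b_0\in\m(F(\phi))$ always—either $b_0\in X$ or else $b_0$ is the first element of $B\setminus X$ and hence lies in $B_\phi$. The hypothesis $m\geq 2n$ is exactly what the argument requires: it guarantees both $|B\setminus X|\geq m-n$ (existence of the padding) and the cycle-length separation—strict when $m>2n$ and nearly so when $m=2n$—that drives injectivity of the main map.
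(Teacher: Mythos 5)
Your argument is correct. For $m>2n$ it is essentially identical to the paper's: pad $\phi$ with a cycle on the first $m-n$ unused points of a fixed finite list, and recover it as the unique cycle of length $m-n>n$. The interesting divergence is at $m=2n$, where the padding cycle and a possible $n$-cycle of $\phi$ have the same length. You resolve the ambiguity by observing that the padding cycle always lives inside the fixed set $B$, so collisions can only occur when $\phi$ is itself an $n$-cycle supported in $B$; this is a concrete finite set of exceptions, which you reroute injectively into permutations whose support avoids $b_0$, after checking that every main-case image moves $b_0$. The paper instead avoids exceptions altogether: it fixes $n+1$ pairwise disjoint blocks $B_0,\dots,B_n$ of size $n$ with associated $n$-cycles $\chi_i$, pads $\phi$ with $\chi_{N_\phi}$ where $N_\phi$ is the least $i$ with $B_i\cap\m(\phi)=\emptyset$ except that $N_\phi=i+1 \pmod{n+1}$ when $\m(\phi)=B_i$, and rules out collisions by a short parity-style argument ($0\equiv 2 \pmod{n+1}$ is impossible for $n\geq 2$). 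Your version is more elementary in that the decoding is purely by cycle length and location, at the cost of a finite exceptional patch whose injection into $S_m(A\setminus\{b_0\})$ you should spell out explicitly (e.g.\ enlarge the fixed finite subset of $A$ and enumerate enough $m$-cycles avoiding $b_0$ -- only finitely many choices, so this is unproblematic in ZF); the paper's version is uniform, with no case split in the definition of the injection beyond the choice of $N_\phi$. Both are valid ZF proofs.
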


\begin{proof}
Let $A$ be an infinite set and $m$ be a natural number such that $m\geq 2n$, where $n>1$. First, we assume $m>2n$. Fix a sequence of $m$ distinct members of $A$, say $\langle y_0,y_1,\ldots,y_{m-1}\rangle$.  For each $\phi \in S_n(A)$, define a permutation $F_\phi \in S_{m-n}(A)$ by \[F_\phi = (x_0;x_1;\ldots ;x_{m-n-1}),\] where $x_0,x_1, \ldots, x_{m-n-1}$ are the first $m-n$ entries of  $\langle y_0,y_1,\ldots,y_{m-1}\rangle$ which are not in $\m(\phi)$ and define $G\colon S_n(A) \rightarrow S_m(A)$ by
\[G(\phi) = \phi \circ F_\phi.\]
Observe that for each $\phi \in S_n(A)$, $\m(\phi) \cap \m(F_\phi) = \emptyset$ and, as $|\m(F_\phi)|=m-n>n=|\m(\phi)|$, $F_\phi$ is the only largest cycle in $G(\phi)$. To see that $G$ is injective, let $\varphi_1$, $\varphi_2\in S_n(A)$ be such that $G(\phi_1) = G(\phi_2)$. Then their largest cycles are the same, which means $F_{\phi_1} = F_{\phi_2}$. So $\phi_1 = G(\phi_1) \circ F_{\phi_1}^{-1} = G(\phi_2) \circ F_{\phi_2}^{-1} = \phi_2$.

Now, assume $m=2n$.
Fix an $n(n+1)$-element subset of $A$, say $B = \{x^i_j : i\leq n\text{ and }j<n\}$. Let $B_i = \{x^i_0, x^i_1, \ldots , x^i_{n-1}\}$ and $\chi_i=(x^i_0;x^i_1;\ldots ;x^i_{n-1})\in S_n(A)$ for each $i\leq n$.

For each $\phi \in S_n(A)$, if there is some (unique) $i\leq n$ such that $\m(\phi)= B_i$, then let $N_\phi = i+1$ (modulo $n+1$), otherwise let $N_\phi = \min\{i\leq n : \m(\phi) \cap B_i = \emptyset\}$. Define $G\colon S_n(A) \rightarrow S_{2n}(A)$ by \[G(\phi) = \phi \circ \chi_{N_\phi}.\]
Observe that for each $\phi\in S_n(A)$, $\m(\phi) \cap \m(\chi_{N_\phi}) = \emptyset$ and $|\m(\phi)|=|\m(\chi_{N_\phi})|=n$, so $G(\phi) \in S_{2n}(A)$. To show that $G$ is injective, suppose $\phi,\psi \in S_n(A)$ are such that  $G(\phi) = G(\psi)$.

Suppose $\phi$ and $\psi$ are cycles with $\m(\phi)=B_p$ and $\m(\psi)=B_q$ for some $p,q\leq n$. Then each of $G(\phi)$ and $G(\psi)$ is a product of two disjoint cycles with length $n$. Suppose $\phi \neq \psi$. Then $\phi = \chi_{N_\psi}$ and $\psi = \chi_{N_\phi}$ by the uniqueness of the decompositions of $G(\phi)$ and $G(\psi)$. From $\phi = \chi_{N_\psi}$, we have $\m(\phi) = B_{N_\psi}$. By the definition of $N_\phi$, we have that $N_\phi = N_\psi+1$ (modulo $n+1$). Similarly, $N_\psi = N_\phi+1$ (modulo $n+1$). So $0=2$ (modulo $n+1$), which is impossible since $n\geq 2$.

For the remaining cases, we may assume $\psi$ is not a cycle or $\m(\psi) \neq B_i$ for all $i\leq n$. Since  $\chi_{N_\phi}$ is a cycle and $\m(\chi_{N_\phi})=B_{N_\phi}$,  $\chi_{N_\phi}\neq \psi$.  Thus $\chi_{N_\phi} = \chi_{N_\psi}$ which implies $\phi=\psi$.
\end{proof}

\section{Consistency Results and Summary}

We have shown, in ZF, that if $X$ is an infinite linearly ordered set, then $|S_{\leq n}(X)| < |\Part_{\fin}(X)|$ (Theorem \ref{linear}).  For arbitrary infinite sets $X$, we show that $|S_{\leq2}(X)| < |\Part_{\fin}(X)|$ (Corollary \ref{S2}) and  $|\Part_{\fin}(X)| \nleq |S_{\leq n}(X)|$ (Theorem \ref{BigThm}). Now, we shall show that the latter statement is the best possible result in ZF for arbitrary infinite sets $X$ and for $n\geq 3$.

We shall use permutation models, which are models of ZFA, set theory with atoms. This theory admits objects which are not sets, called \emph{urelements} or \emph{atoms}. We provide sufficient details as follows:

Let $A$ be an infinite set of atoms and $\mcal{G}$ be a group of permutations on $A$. Define $V_0=A$, $V_{\alpha+1}=\mcal{P}(V_\alpha) \cup V_\alpha$, $V_{\gamma} = \bigcup_{\alpha<\gamma} V_\alpha$ for limits $\gamma$, and $V = \bigcup_{\alpha \in \ON} V_\alpha$. Each $\pi\in\mcal{G}$ is extended to a permutation on $V$ so that $\pi x=x$ whenever $x$ is a \emph{pure set}, a set whose transitive closure contains no atoms.  For each $x\in V$, let $\fix_\mcal{G}(x) = \{ \pi\in\mcal{G} : \pi y = y \text{\ for all\ } y\in x \}$ and $\sym_\mcal{G}(x) = \{ \pi\in\mcal{G} : \pi x = x \}$. For a normal ideal $I$ on $A$, a set $E \in I$ is a \emph{support} of $x$ if $\fix_\mcal{G}(E) \subseteq \sym_\mcal{G}(x)$. Given a normal ideal $I$ on $A$, we define $\mcal{V} = \{x\in V : x \text{\ has a support and\ } x\subseteq\mcal{V}\}$. The class $\mcal{V}$, which is determined by $A$, $\mcal{G}$, and $I$, is called a \emph{permutation model}. For more details, see \cite[Chapter 4]{Jech}.
We shall use the basic Fraenkel model $\mcal{V}_{F_0}$ which is a permutation model with a countably infinite set $A$ of atoms, the group $\mcal{G}$ of all permutations on $A$, and the normal ideal $\fin(A)$.

 In the following, we assume $n\geq 3$ and for a set $X$, let $C_{n}(X)=\left\{ \phi \in S_n(X) : \phi \text{\ is a cycle}\right\}$.

\begin{thm}
$\mcal{V}_{F_0} \vDash  |C_{n}(A)| \nleq |\Part(A)|$.
\end{thm}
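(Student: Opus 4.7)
My plan is to suppose for contradiction that an injection $F \colon C_n(A) \to \Part(A)$ lives in $\mcal{V}_{F_0}$, fix a finite support $E \subset A$ for $F$, and derive a contradiction by exhibiting a symmetry that stabilises $F(\phi)$ but does not commute with $\phi$. That $E$ supports $F$ means $F(\pi\phi\pi^{-1}) = \pi F(\phi)$ for every $\pi \in \fix_\mcal{G}(E)$. Since $A$ is countably infinite and $E$ is finite, I would fix distinct atoms $a_0,\ldots,a_{n-1} \in A \setminus E$, put $S = \{a_0, \ldots, a_{n-1}\}$, $\phi = (a_0; a_1; \ldots; a_{n-1})$, and $\Pi = F(\phi)$; note that $\Pi$ is supported by $E \cup S$. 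For each $\sigma \in \text{Sym}(S)$ let $\tilde\sigma \in \fix_\mcal{G}(E)$ be its trivial extension that fixes $A \setminus S$ pointwise. A direct calculation gives $\tilde\sigma\phi\tilde\sigma^{-1} = \sigma\phi\sigma^{-1}$, so, combining equivariance with injectivity of $F$, I obtain the pivotal equivalence
\[ \sigma \in \text{stab}_{\text{Sym}(S)}(\Pi) \iff \tilde\sigma\phi\tilde\sigma^{-1} = \phi \iff \sigma \in \langle\phi|_S\rangle, \]
the last step using that the centraliser in $\text{Sym}(S)$ of the $n$-cycle $\phi|_S$ is $\langle\phi|_S\rangle$.

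To falsify this equivalence, I would analyse $\text{stab}_{\text{Sym}(S)}(\Pi)$ directly. Because $\tilde\sigma$ fixes $A\setminus S$ pointwise, any block $B$ of $\Pi$ meeting $A\setminus S$ must be $\tilde\sigma$-invariant, which forces $\sigma(B\cap S) = B\cap S$; call such $B$ \emph{anchored} and call blocks $B\subseteq S$ \emph{free}. Since $\phi|_S$ lies in the stabiliser (by the equivalence) and acts transitively on $S$, the only $\phi$-invariant subsets of $S$ are $\emptyset$ and $S$. Hence either (i) no anchored block meets $S$ and the free blocks form a single $\phi$-orbit of $m$ blocks of common size $k$ with $km = n$, so $\text{stab}_{\text{Sym}(S)}(\Pi)$ is the wreath product permuting these blocks, of order $(k!)^m\,m!$; or (ii) a single anchored block absorbs all of $S$, so $\text{stab}_{\text{Sym}(S)}(\Pi) = \text{Sym}(S)$ of order $n!$.

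The contradiction then comes from the arithmetic $(k!)^m\,m! > km = n$ and $n! > n$, both valid for $n\geq 3$, so $\text{stab}_{\text{Sym}(S)}(\Pi) \supsetneq \langle\phi|_S\rangle$ in either case. Any $\sigma$ in the difference yields $\tilde\sigma\Pi = \Pi$ but $\tilde\sigma\phi\tilde\sigma^{-1}\neq\phi$, whence injectivity of $F$ gives $F(\phi) = \Pi = \tilde\sigma\Pi = \tilde\sigma F(\phi) = F(\tilde\sigma\phi\tilde\sigma^{-1}) \neq F(\phi)$, the desired contradiction. The step I expect to be the main obstacle is the structural description in case (i): one has to justify that the free blocks form a single $\phi$-orbit of common size $k$ (using the transitivity of $\phi|_S$ and that its only invariant subsets of $S$ are trivial), before identifying the stabiliser as the wreath product and invoking the order comparison.
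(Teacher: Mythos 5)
Your proof is correct, and it runs on the same engine as the paper's: fix a finite support $E$ of the putative injection $F$, take an $n$-cycle $\phi$ on atoms outside $E$, and use equivariance plus injectivity to conclude that a permutation of $S=\m(\phi)$ whose trivial extension stabilises $\Pi=F(\phi)$ must centralise $\phi$, hence lie in the cyclic group $\langle\phi\restriction S\rangle$ of order $n$. Where you genuinely diverge is in how you contradict this. You compute the full stabiliser of $\Pi$ inside the symmetric group on $S$: using that $\phi$ itself stabilises $\Pi$ and acts transitively on $S$, you show each anchored block meets $S$ in $\emptyset$ or all of $S$, so the trace of $\Pi$ on $S$ is either a uniform partition into $m$ free blocks of size $k$ (stabiliser a wreath product of order $(k!)^m m!$) or all of $S$ inside one anchored block (stabiliser of order $n!$), and in either case the order exceeds $n$ when $n\geq 3$. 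The paper instead exhibits a single offending transposition in each of two cases: if some $a_i\sim_\Pi a_j$, it transposes two $\sim_\Pi$-equivalent points of $S$ (which fixes $\Pi$ blockwise but does not centralise an $n$-cycle for $n\geq 3$); otherwise it first argues that every $a_j$ lies in a singleton block and then transposes $a_0$ with $a_1$. The paper's finish is leaner, avoiding the wreath-product order count; yours buys a complete structural description of the stabiliser and makes the role of $n\geq 3$ transparent as the arithmetic facts $(k!)^m m!>km$ and $n!>n$, which fail exactly at $n=2$, matching the failure of the theorem there. All the steps you flagged as potential obstacles (transitivity forcing a single orbit of equal-sized free blocks, and the identification of the stabiliser) do go through as you describe.
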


\begin{proof}
 Suppose to the contrary that there is an injection $F: C_n(A) \rightarrow \Part(A)$ with a finite support $E$. Pick $n$ distinct elements $a_0,a_1,\ldots ,a_{n-1}$ in $A\setminus E$. Define $\pi=(a_0;a_1;\ldots; a_{n-1})$ and let $\Pi = F(\pi)$. Since $\pi$ fixes all members of $E$ and $\pi\pi = \pi$,
\[\pi\Pi=\pi(F\pi)=\pi F(\pi\pi)=F(\pi)=\Pi.\]

For $a,b\in A$, we say that $a \sim_\Pi b$ whenever there is some $X\in \Pi$ such that $a,b\in X$, and write $[a]_\Pi$ for $\{c : a \sim_\Pi c\}$.

\textbf{Case 1.} There are some distinct $i,j<n$ such that $a_i \sim_\Pi a_j$.

Since $\pi \Pi = \Pi$, we have $\pi a_i \sim_\Pi \pi a_j$.
 So there is some $P \in \Pi$ such that $\pi a_i,\pi a_j \in P$. Define $\rho=(\pi a_i;\pi a_j)$. Since $\m(\rho)=\{\pi a_i,\pi a_j\}\subseteq P\setminus E$, $\rho F = F$ and $\rho \Pi = \Pi$. Note that $\rho \pi\in C_n(A)$.  Hence
 \[F(\rho \pi) = \rho F(\rho \pi)= \rho(F\pi)= \rho \Pi = \Pi = F(\pi).\] However, $\rho \pi \neq \pi$ since  $n\geq 3$. This contradicts the injectivity of $F$.

 \medskip

\textbf{Case 2.} There are no distinct $i,j<n$ such that $a_i \sim_\Pi a_j$.

Suppose there is some $i<n$ such that $[a_i]_\Pi\neq\{a_i\}$, i.e. $\{a_i\}\subsetneq [a_i]_\Pi$. Let $X=[a_i]_\Pi\setminus\{a_i\}$. Then $\emptyset\neq X\subseteq A\setminus\{a_k:k<n\}$ and so $\pi$ fixes $X$ pointwise. Since $\pi\Pi=\Pi$, $X\subseteq[a_i]_\Pi\cap [\pi a_i]_\Pi$ but $a_i \not\sim_\Pi \pi a_i$, a contradiction. Thus $[a_j]_\Pi=\{a_j\}$ for all $j<n$.
Let $\rho = (a_0;a_1)$. Then $\rho \Pi = \Pi$ and hence, as in the previous case, $F(\rho \pi) = F(\pi)$ while $\rho \pi \neq \pi$, contradicting the injectivity of $F$.
\end{proof}

From the above theorem, we have that \lq\lq$|C_{n}(X)| \nleq |\Part(X)|$ for some infinite set $X$\rq\rq\ holds in $\mcal{V}_{F_0}$ which is a model of ZFA. This statement can be transferred to ZF by the Jech-Sochor First Embedding Theorem (cf. \cite[Theorem 6.1]{Jech}). As a result, \lq\lq$|C_{n}(X)| \leq |\Part(X)|$ for all infinite sets $X$\rq\rq\ is not provable in ZF, provided ZF is consistent.
Since $C_n(X) \subseteq S_{n}(X)\subseteq S_{\leq n}(X)$ and $\Part_{\fin}(X)\subseteq \Part(X)$ for any set $X$, $C_n(A)$ in the above theorem can be replaced by $S_{n}(A)$ and $S_{\leq n}(A)$, and $\Part(A)$ by $\Part_{\fin}(A)$ as well. Therefore we can conclude that the best possible result for relationships between $S_{\leq n}(X)$ and  $\Part_{\fin}(X)$ provable in ZF for arbitrary infinite sets $X$ and for $n\geq3$ is that $|\Part_{\fin}(X)| \nleq |S_{\leq n}(X)|$.

For relationships among $S_n(X)$'s for infinite sets $X$, where $n>1$, it has been shown in \cite[Theorem 3.2]{NV} that \lq\lq$S_n(X)\leq S_{n+1}(X)$ for all infinite sets $X$\rq\rq\ is not provable in ZF. However, we show in Theorem \ref{Sn} that the statement is provable if the subscript $n+1$ is replaced by any natural number $m$ such that $m\geq2n$. We are still wondering whether the subscript $n+1$ can be replaced by $2n-1$ or not. This is left open for future research.

\section{Acknowledgement}

This research project is supported by grants for development of new faculty staff, Ratchadaphiseksomphot Fund, Chulalongkorn University.

\end{document}